\theoremstyle{plain}
\newtheorem{Thm}{Theorem}[section]
\newtheorem{Cor}{Corollary}[section]
\newtheorem{Rem}{Remark}
\newtheorem{Opp}{Open problem}
\theoremstyle{definition}
\newtheorem{Def}{Definition}[section]
\theoremstyle{remark}
\numberwithin{equation}{section}
\begin{document}

\title[Recent results on the stability of an equation]
{Recent results on the stability of the parametric fundamental equation of information}
\author{Eszter Gselmann}

\address{
Institute of Mathematics\\
University of Debrecen\\
P.~O.~Box: 12.\\
Debrecen\\
Hungary\\
H--4010}
\email{gselmann@math.klte.hu}

\thanks{This research has been supported by the Hungarian Scientific Research Fund
(OTKA) Grant NK 68040.}
\subjclass{39B82, 94A17}
\keywords{Stability, superstability,
parametric fundamental equation of information, entropy of degree alpha}

\begin{abstract}
The purpose of this paper is to summarize the
recent results on the stability of the parametric
fundamental equation of information.
Furthermore, by the help of a modification of a method we used in
\cite{GM08} we shall give a unified proof for the
Hyers--Ulam stability of the equation
in question, assuming that the parameter does not equal to 1.
As a corollary of the main result, a system of equations,
that defines the recursive and semi--symmetric
information measures is also discussed.
\end{abstract}

\maketitle

\section{Introduction and preliminaries}

The study of stability problems for functional equations
originates from a famous question of Ulam.
In his talk he asked whether it is true that the
solution of an equation differing slightly from a
given one, must of necessity be close to the solution
of this equation (see Ulam \cite{Ula64} page 63).
Concerning the additive Cauchy equation,
Hyers gave an affirmative answer to Ulam's question in
1941 (see Hyers \cite{Hye41}).
Since then, this result has been extended and generalized in several
ways (see e.g. Forti \cite{For95}, Ger \cite{Ger94}, Hyers--Isac--Rassias \cite{HIR98} and
Moszner \cite{Mos04}), and the stability theory has become a dynamically
developing field of research.

In this paper the previous problem is investigated concerning
the parametric fundamental equation of information, i.e., equation
\begin{equation}\label{Eq1.1}
f(x)+(1-x)^{\alpha}f\left(\frac{y}{1-x}\right)=
f(y)+(1-y)^{\alpha}f\left(\frac{x}{1-y}\right).
\end{equation}
If $\alpha=1$, then equation \eqref{Eq1.1} is called the fundamental
equation of information (see Acz\'{e}l--Dar\'{o}czy \cite{AD75}).
However, before this, we shall fix the notation and the terminology
that will be used throughout this paper.
As usual, $\mathbb{R}$  denotes the set of the real numbers
and on $\mathbb{R}_{+}$ and on $\mathbb{R}_{++}$
we understand the set of the nonnegative and the positive real numbers, respectively.
Furthermore, let $n$ be a fixed positive integer
and define the following sets
\[
\Gamma_{n}=\left\{(p_{1}, \ldots, p_{n})\in\mathbb{R}^{n}
\vert p_{i}\geq 0, i=1, \ldots, n, \sum^{n}_{i=1}p_{i}=1\right\},
\]

\[
\Gamma^{\circ}_{n}=\left\{(p_{1}, \ldots, p_{n})\in\mathbb{R}^{n}
\vert p_{i}> 0, i=1, \ldots, n, \sum^{n}_{i=1}p_{i}=1\right\},
\]

\[
D=\left\{(x, y)\in\mathbb{R}^{2}\vert x, y\in [0, 1[, x+y\leq 1\right\}
\]
and

\[
D^{\circ}=\left\{(x, y)\in\mathbb{R}^{2}\vert x, y, x+y\in ]0, 1[\right\}.
\]

To make our result comprehensible, first we list some basic facts
from the theory of functional equations. These can be
found e.g. in Kuczma \cite{Kuc85} and in Rad\'{o}--Baker \cite{RB87}.

\begin{Def} \cite{Kuc85}, \cite{RB87}
Let $I\subset\mathbb{R}_{+}$ and
\[
\mathcal{A}=\left\{(x, y)\in\mathbb{R}^{2}_{+}\vert x, y, x+y\in I\right\}.
\]
A function
$a:I\rightarrow\mathbb{R}$ is called \emph{additive on $\mathcal{A}$} if
\begin{equation}\label{Eq1.2}
a\left(x+y\right)=a\left(x\right)+a\left(y\right)
\end{equation}
holds for all pairs $(x, y)\in \mathcal{A}$. \\
\noindent
Consider the set
\[
\mathcal{I}=\left\{(x, y)\in\mathbb{R}^{2}_{+}\vert x, y, xy \in I\right\}.
\]
We say that
$\mu:I\rightarrow\mathbb{R}$ is \emph{multiplicative on $\mathcal{I}$} if the functional equation
\begin{equation}\label{Eq1.3}
\mu\left(x y\right)=\mu\left(x\right)\mu\left(y\right)
\end{equation}
is fulfilled for all $(x, y)\in\mathcal{I}$. \\
\noindent
If
\[\mathcal{L}=\left\{(x, y)\in\mathbb{R}^{2}_{++}\vert x, y, xy \in I\right\}
\]
then a function
$l:I\rightarrow\mathbb{R}$ is called \emph{logarithmic on $\mathcal{L}$}
if it satisfies the functional equation
\begin{equation}\label{Eq1.4}
l\left(x y\right)=l\left(x\right)+l\left(y\right)
\end{equation}
for all $(x, y)\in \mathcal{L}$.
\end{Def}

The parametric fundamental equation of information arises
in a natural way in the characterization problem
of information measures.
A sequence $(I_{n})$ of real-valued functions
on $\Gamma^{\circ}_{n}$ or on $\Gamma_{n}$ is
called an \emph{information measure} on the open or on the closed
domain, respectively.
The usual information-theoretical interpretation is that
$I_{n}(p_{1}, \ldots, p_{n})$ is a measure
of uncertainty as to the outcome of an  experiment
having $n$ possible outcomes with probabilities $p_{1}, \ldots, p_{n}$,
or, in other words, it is the amount of information received
from the knowledge of which of the possible outcomes occurred.

Some desiderata for information measures can be found in
Acz\'{e}l--Dar\'{o}czy \cite{AD75} as well as in
Ebanks--Sahoo--Sander \cite{ESS98}.
Nevertheless, in this paper we will use only the following
properties. The reader should consult
Acz\'{e}l \cite{Acz81}, \cite{Acz86},
Dar\'{o}czy \cite{Dar70}, Havrda--Charv\'{a}t \cite{HC67} and Tsallis \cite{Tsa88}, as well.
\begin{Def}
The sequence of functions $I_{n}:\Gamma^{\circ}_{n}\rightarrow\mathbb{R}$
($n=2, 3, \ldots$) is
\begin{enumerate}[(i)]
	\item \emph{$\alpha$--recursive}, if
	\[
	I_{n}\left(p_{1}, \ldots, p_{n}\right)=I_{n-1}\left(p_{1}+p_{2}, p_{3}, \ldots, p_{n}\right)+
	\left(p_{1}+p_{2}\right)^{\alpha}I_{2}\left(\frac{p_{1}}{p_{1}+p_{2}}, \frac{p_{2}}{p_{1}+p_{2}}\right)
\]
holds for all $n=3, 4, \ldots$ and $\left(p_{1}, \ldots, p_{n}\right)\in\Gamma^{\circ}_{n}$, with some
$\alpha\in\mathbb{R}$.
\item \emph{3--semi--symmetric}, if
	\[
	I_{3}\left(p_{1}, p_{2}, p_{3}\right)=I_{3}\left(p_{1}, p_{3}, p_{2}\right)
\]
holds for all $\left(p_{1}, p_{2}, p_{3}\right)\in\Gamma^{\circ}_{3}$.
\end{enumerate}
\end{Def}
Measures depending on one probability distribution are generally
referred as \emph{entropies}.
Probably the most well-known of all is the \emph{Shannon-entropy}
\[
H^{1}_{n}(p_{1}, \ldots, p_{n})=
-\sum^{n}_{i=1}p_{i}\log_{2}\left(p_{i}\right),
\quad \left((p_{1}, \ldots, p_{n})\in\Gamma^{\circ}_{n}\right)
\]
and \emph{the entropy of degree $\alpha$
(or the Havrda-Charv\'{a}t-entropy that recently has also been called Tsallis-entropy)}
\[
H^{\alpha}_{n}(p_{1}, \ldots, p_{n})=
\left(2^{1-\alpha}-1\right)^{-1}\left(\sum^{n}_{i=1}p^{\alpha}_{i}-1\right).
\quad \left(\alpha\neq 1, (p_{1}, \ldots, p_{n})\in\Gamma^{\circ}_{n}\right)
\]
It is easy to see that, for all $(p_{1}, \ldots, p_{n})\in\Gamma^{\circ}_{n}$,
\[
\lim_{\alpha\rightarrow 1}H^{\alpha}_{n}(p_{1}, \ldots, p_{n})=
H^{1}_{n}(p_{1}, \ldots, p_{n})
\]
holds and this shows that $(H_{n}^{1})$ can be continuously embedded to the family of $(H^{\alpha}_{n})$.

The following theorem enables us to transform the
characterization of information measures into solving
functional equations (see Acz\'{e}l--Dar\'{o}czy \cite{AD75} and Ebanks--Sahoo--Sander \cite{ESS98}).
\begin{Thm}
If the sequence of functions
$I_{n}:\Gamma^{\circ}_{n}\rightarrow\mathbb{R}$,
$(n=2, 3, \ldots)$ is $\alpha$--recursive and
$3$--semi--symmetric, then
the function $f:]0, 1[\rightarrow\mathbb{R}$ defined by
\[
f(x)=I_{2}(1-x, x)\quad \left(x\in ]0, 1[\right)
\]
satisfies functional equation (\ref{Eq1.1}) for all
$(x, y)\in D^{\circ}$.
\end{Thm}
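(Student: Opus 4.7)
The plan is to exploit the interplay between $\alpha$--recursivity and $3$--semi--symmetry at the level of $I_3$, and then translate the identity back to $f$ through the defining relation $f(t)=I_2(1-t,t)$.

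Concretely, given $(x,y)\in D^{\circ}$, I would set $p_1=1-x-y$, $p_2=y$, $p_3=x$. The conditions $x,y>0$ and $x+y<1$ ensure $(p_1,p_2,p_3)\in\Gamma_3^{\circ}$, so that the defining properties of $I_n$ are applicable. Applying $\alpha$--recursivity to $I_3(p_1,p_2,p_3)$ yields
\[
I_3(p_1,p_2,p_3)=I_2(p_1+p_2,p_3)+(p_1+p_2)^{\alpha}\,I_2\!\left(\frac{p_1}{p_1+p_2},\frac{p_2}{p_1+p_2}\right),
\]
and applying it to $I_3(p_1,p_3,p_2)$ yields the analogous identity with $p_2$ and $p_3$ interchanged. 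By $3$--semi--symmetry the two left--hand sides agree, so the two right--hand sides are equal.

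Next I would rewrite every occurrence of $I_2$ in terms of $f$. Since $p_1+p_2+p_3=1$, we have $I_2(p_1+p_2,p_3)=I_2(1-p_3,p_3)=f(p_3)$ and similarly $I_2(p_1+p_3,p_2)=f(p_2)$. For the second argument, the identity $\frac{p_1}{p_1+p_2}+\frac{p_2}{p_1+p_2}=1$ gives $I_2\!\left(\frac{p_1}{p_1+p_2},\frac{p_2}{p_1+p_2}\right)=f\!\left(\frac{p_2}{p_1+p_2}\right)$, and analogously for the swapped version. Using $p_1+p_2=1-p_3=1-x$ and $p_1+p_3=1-p_2=1-y$, the equality of the two right--hand sides becomes precisely equation \eqref{Eq1.1}.

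There is no real obstacle here beyond careful bookkeeping; the entire argument is a direct substitution, and the only point that must be checked is that the substitution $p_1=1-x-y$, $p_2=y$, $p_3=x$ lands in $\Gamma_3^{\circ}$ exactly when $(x,y)\in D^{\circ}$, which is immediate from the definitions of these sets.
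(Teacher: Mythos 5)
Your proof is correct: the substitution $p_1=1-x-y$, $p_2=y$, $p_3=x$ lands in $\Gamma_3^{\circ}$ precisely for $(x,y)\in D^{\circ}$, and applying $\alpha$--recursivity to both $I_3(p_1,p_2,p_3)$ and $I_3(p_1,p_3,p_2)$ together with $3$--semi--symmetry yields exactly \eqref{Eq1.1}. The paper omits the proof (citing Acz\'el--Dar\'oczy and Ebanks--Sahoo--Sander), but your argument is the standard one found in those references, so there is nothing further to compare.
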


\section{Known results}

In this section we will shortly list the results which
have been achieved  in the last academic year on the
stability of the parametric fundamental equation of information.

Concerning this topic, the first result was
the stability of equation (\ref{Eq1.1})  on the set $D$,
assuming that $1\neq \alpha>0$ (see Maksa \cite{Mak08}). Furthermore
the stability constant, he has got in that paper is much smaller than that of our.
However, the method, used in Maksa \cite{Mak08} does not work if
$\alpha=1$ or $\alpha\leq 0$ or if we consider the problem on the open domain.

After that, it was proved that equation (\ref{Eq1.1}) is stable in the sense
of Hyers and Ulam on the set $D^{\circ}$ as well as on $D$, assuming that $\alpha\leq 0$
(see \cite{GM08}). Recently it turned out that this method is appropriate
to prove superstability  in case $1\neq \alpha>0$. Thus we can give a unified proof for
the stability problem of equation (\ref{Eq1.1}) except the case $\alpha=1$.

\section{The main result}

Our main result is contained in the following theorem.

\begin{Thm}\label{Thm2.1}
Let $\alpha, \varepsilon\in\mathbb{R}$ be fixed,
$\alpha\neq 1, \varepsilon\geq 0$.
Suppose that the function $f:]0, 1[\rightarrow\mathbb{R}$
satisfies the inequality
\begin{equation}\label{Eq2.1}
\left|f(x)+(1-x)^{\alpha}f\left(\frac{y}{1-x}\right)
-f(y)-(1-y)^{\alpha}f\left(\frac{x}{1-y}\right)
\right|\leq\varepsilon
\end{equation}
for all $(x, y)\in D^{\circ}$.
Then, in case $\alpha=0$,
there exists a logarithmic function
$l:]0, 1[\rightarrow\mathbb{R}$ and $c\in\mathbb{R}$ such that
\begin{equation}\label{Eq2.2}
\left|f(x)-\left[l(1-x)+c\right]\right|\leq K(\alpha)\varepsilon,
\quad \left(x\in ]0, 1[\right)
\end{equation}
furthermore, if $\alpha\notin \left\{0, 1\right\}$, there
exist $a, b\in\mathbb{R}$ such that
\begin{equation}\label{Eq2.3}
\left|f(x)-\left[ax^{\alpha}+b(1-x)^{\alpha}-b\right]\right|\leq K(\alpha)\varepsilon
\end{equation}
holds for all $x\in ]0, 1[$, where
\[
K(\alpha)=\left\{
\begin{array}{lcl}
\left|2^{1-\alpha}-1\right|^{-1}\left(8+6\cdot2^{\alpha}+2^{-\alpha}\right), &\text{ if}& \alpha<0 \\
63, &\text{ if}& \alpha=0\\
\left|2^{1-\alpha}-1\right|^{-1}
\left(3+12\cdot2^{\alpha}+\frac{32\cdot3^{\alpha+1}}{\left|2^{-\alpha}-1\right|}\right), &\text{ if}& \alpha>0 .
\end{array}
\right.
\]
\end{Thm}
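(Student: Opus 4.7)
The strategy I would adopt is to extend the iterative scheme from \cite{GM08}---originally developed for $\alpha\le 0$---so that it also covers the range $\alpha>0$, $\alpha\neq 1$, in a single argument. Write $E(x,y)$ for the signed expression inside the absolute value in \eqref{Eq2.1}, so that $|E(x,y)|\le\varepsilon$ on $D^{\circ}$ and $E(x,y)=-E(y,x)$. The first step is to combine a small number of instances of \eqref{Eq2.1} through carefully chosen substitutions---typically $(x,y)$, $(1-x-y,y)$, $(x,1-x-y)$, and a scaled pair such as $(x,(1-x)/2)$---and apply the triangle inequality. After a short calculation this yields a bound of the form
\[
\bigl|\,f(x)-2^{\alpha}f(x/2)-R_{\alpha}(x)\,\bigr|\le c_{1}(\alpha)\,\varepsilon,
\]
where $R_{\alpha}(x)$ is an explicit affine combination of $f(1/2)$, $x^{\alpha}$, $(1-x)^{\alpha}$ and constants, valid on a subinterval of $(0,1)$.

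The second step is to iterate this recursion. Since $\alpha\neq 1$ forces $|2^{1-\alpha}-1|\neq 0$, the associated geometric series with ratio $2^{\alpha-1}$ is summable in exactly one direction---forward iteration for $\alpha<1$, backward iteration for $\alpha>1$---and its sum produces the universal factor $|2^{1-\alpha}-1|^{-1}$ that appears in $K(\alpha)$. The fixed point of the recursion provides the coefficient $a$, and an analogous symmetric iteration, obtained by replacing $x$ with $1-x$ in the substitutions, furnishes $b$. Because $(1-x)^{\alpha}-1$ is an exact solution of \eqref{Eq1.1} but degenerates to $0$ at $\alpha=0$, the coefficient $b$ is not pinned down in that case; instead the same family of substitutions collapses to an approximate logarithmic equation for the auxiliary function $l(x):=f(1-x)-c$ (with $c=f(1/2)$), whose Hyers-type stability is classical (cf.\ \cite{Kuc85}, \cite{RB87}) and accounts for the single constant $63$.

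The main technical obstacle, in my view, is the simultaneous management of two features of $K(\alpha)$: the denominator $|2^{1-\alpha}-1|$ coming from the primary iteration and, for $\alpha>0$, the additional denominator $|2^{-\alpha}-1|$ paired with the factor $3^{\alpha+1}$. The latter strongly suggests that a secondary iteration at scale $3$ (via substitutions such as $(x,(1-x)/3)$) is required to control $b$ in the regime $\alpha>0$, and the interplay of the two geometric series has to be tracked carefully while keeping every substitution inside $D^{\circ}$. Once these estimates are in place, the final step is routine bookkeeping: take $a$, $b$ (or $l$, $c$) as the limits furnished by the iterations and verify \eqref{Eq2.2} or \eqref{Eq2.3} by the triangle inequality, summing the explicit contributions of each substitution to obtain the exact form of $K(\alpha)$.
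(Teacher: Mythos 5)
Your outline rests on a step that is never actually established, and that step is precisely the hard part of the theorem. You assert that ``a short calculation'' with the substitutions $(x,y)$, $(1-x-y,y)$, $(x,1-x-y)$, $(x,(1-x)/2)$ yields a single-variable recursion $\left|f(x)-2^{\alpha}f(x/2)-R_{\alpha}(x)\right|\le c_{1}(\alpha)\varepsilon$. No such recursion is derived, and eliminating the two-variable structure of \eqref{Eq2.1} in favour of a self-referential estimate on $f$ alone is exactly where all the work lies. The paper does this elimination by a completely different mechanism: it extends $f$ to the $\alpha$-homogeneous function $F(u,v)=(u+v)^{\alpha}f\left(v/(u+v)\right)$, converts \eqref{Eq2.1} into the cocycle-type inequality \eqref{Eq2.7}, introduces the asymmetry function $g(u)=F(u,1)-F(1,u)$ to obtain the approximate symmetry \eqref{Eq2.10}, determines $g$ (exactly, by a limit $t\to 0$, when $\alpha<0$; via stability of the logarithmic equation when $\alpha=0$; via a Pexider-type estimate \eqref{Eq2.31} with $t=\tfrac12$ when $\alpha>0$), and only then invokes the algebraic identity \eqref{Eq*}, in which setting $q=\tfrac12$ divides by $q^{\alpha}+(1-q)^{\alpha}-1=2^{1-\alpha}-1$ in one shot. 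In particular, the factor $\left|2^{1-\alpha}-1\right|^{-1}$ in $K(\alpha)$ is \emph{not} the sum of a geometric series with ratio $2^{\alpha-1}$ (that sum would be $\left(1-2^{\alpha-1}\right)^{-1}=2^{1-\alpha}\left(2^{1-\alpha}-1\right)^{-1}$, a different constant), and the factor $3^{\alpha+1}$ is merely the crude bound $(t(v+1)+1)^{\alpha}\le 3^{\alpha}$ for $t,v\in\left.]0,1[\right.$ applied to the four terms of $B(t,v)$ --- it has nothing to do with a secondary iteration at scale $3$.

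Two further points would defeat the plan even if the recursion were granted. First, your ``backward iteration for $\alpha>1$'' requires inverting $x\mapsto x/2$, and $x\mapsto 2x$ does not map $\left.]0,1[\right.$ into itself; restricting to a subinterval and then propagating the estimate back to all of $\left.]0,1[\right.$ is an additional unproved step you only gesture at (``valid on a subinterval''). Second, since the theorem asserts the specific constant $K(\alpha)$, an iteration scheme producing a different (even slightly larger) constant would not prove the statement as written; your final ``routine bookkeeping'' cannot be checked because none of the intermediate constants $c_{1}(\alpha)$ are computed. The claim for $\alpha=0$ that the substitutions ``collapse to an approximate logarithmic equation for $f(1-x)-c$'' is likewise only the desired conclusion restated: in the paper the approximate logarithmic object is $g(u)=f\left(1/(1+u)\right)-f\left(u/(1+u)\right)$ on $\mathbb{R}_{++}$, and passing from there to \eqref{Eq2.2} still requires the identity \eqref{Eq*} machinery. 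As it stands the proposal is a programme, not a proof, and its central estimate is missing.
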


\begin{proof}
Define the function $F$ on $\mathbb{R}^{2}_{++}$ by
\begin{equation}\label{Eq2.4}
F(u, v)=(u+v)^{\alpha}f\left(\frac{v}{u+v}\right).
\end{equation}
Then
\begin{equation}\label{Eq2.5}
F(tu, tv)=t^{\alpha}F(u, v) \quad \left(t, u, v \in \mathbb{R}_{++}\right)
\end{equation}
and
\begin{equation}\label{Eq2.6}
f(x)=F(1-x, x), \quad \left(x\in ]0,1[\right)
\end{equation}
furthermore, with the substitutions
\[
x=\frac{w}{u+v+w}, \quad y=\frac{v}{u+v+w} \quad \left(u, v, w\in \mathbb{R}_{++}\right)
\]
inequality (\ref{Eq2.1}) implies that
\[
\begin{array}{l}
\left|f\left(\frac{w}{u+v+w}\right)+\frac{(u+v)^{\alpha}}{(u+v+w)^{\alpha}}f\left(\frac{v}{u+v}\right)\right. \\
\left. -f\left(\frac{v}{u+v+w}\right)-\frac{(u+w)^{\alpha}}{(u+v+w)^{\alpha}}f\left(\frac{w}{u+w}\right)\right|
\leq \varepsilon
\end{array}
\]
whence, by (\ref{Eq2.4})
\begin{equation}\label{Eq2.7}
\left|F(u+v, w)+F(u, v)-F(u+w, v)-F(u, w)\right|\leq \varepsilon (u+v+w)^{\alpha}
\end{equation}
follows for all $u, v, w\in\mathbb{R}_{++}$.

In the next step we define the functions $g$ and $G$ on
$\mathbb{R}_{++}$ and on $\mathbb{R}_{++}^{2}$, respectively by
\begin{equation}\label{Eq2.8}
g(u)=F(u, 1)-F(1, u)
\end{equation}
and
\begin{equation}\label{Eq2.9}
G(u, v)=F(u, v)+g(v).
\end{equation}
We will show that
\begin{equation}\label{Eq2.10}
\left|G(u, v)-G(v, u)\right|\leq 3\varepsilon (u+v+1)^{\alpha}.
\quad \left(u, v\in \mathbb{R}_{++}\right)
\end{equation}
Indeed, with the substitution $w=1$, inequality (\ref{Eq2.7})
implies that
\begin{equation}\label{Eq2.11}
\left|F(u+v, 1)+F(u, v)-F(u+1, v)-F(u, 1)\right|\leq \varepsilon (u+v+1)^{\alpha}.
\end{equation}
Interchanging $u$ and $v$, it follows from (\ref{Eq2.11}) that
\[
\left|-F(u+v, 1)-F(v, u)+F(v+1, u)-F(v, 1)\right|\leq \varepsilon (u+v+1)^{\alpha}.
\quad \left(u, v\in\mathbb{R}_{++}\right)
\]
This inequality, together with (\ref{Eq2.11}) and the triangle inequality
imply that
\begin{equation}\label{Eq2.12}
\left|F(u, v)-F(v, u)-F(u+1, v)-F(u, 1)+F(v+1, u)+F(v, 1)\right|\leq 2\varepsilon (u+v+1)^{\alpha}
\end{equation}
holds for all $u, v\in\mathbb{R}_{++}$.
On the other hand, with $u=1$, we get from (\ref{Eq2.7}) that
\[
\left|F(1+v, w)+F(1, v)-F(1+w, w)-F(1, w)\right|\leq \varepsilon(1+v+w)^{\alpha}.
\]
Replacing here $v$ by $u$ and $w$ by $v$, respectively, we have that
\[
\left|F(u+1, v)+F(1, u)-F(v+1, u)-F(1, v)\right|\leq \varepsilon (u+v+1)^{\alpha}.
\quad \left(u, v\in\mathbb{R}_{++}\right)
\]
Again, by the triangle inequality and the definitions
(\ref{Eq2.8}) and (\ref{Eq2.9}), (\ref{Eq2.12}) and the last inequality imply (\ref{Eq2.10}).

In what follows we will investigate the function $g$.
At this point of the proof we have to distinguish three cases.

\underline{\textsc{Case I.}} ($\alpha<0$)

In this case we will determine the function $g$ by proving that
\begin{equation}\label{Eq2.13}
g(u)=c(u^{\alpha}-1) \quad \left(u\in\mathbb{R}_{++}\right)
\end{equation}
with some $c\in \mathbb{R}$.

Indeed, (\ref{Eq2.10}), (\ref{Eq2.9}) and (\ref{Eq2.5}) imply that
\[
\left|G(tu, tv)-G(tv, tu)\right|\leq 3\varepsilon (tu+tv+1)^{\alpha},
\quad \left(t, u, v\in\mathbb{R}_{++}\right)
\]
therefore
\[
\left|t^{\alpha}F(u, v)+g(tv)-t^{\alpha}F(v, u)-g(tu)\right| \leq
3\varepsilon (tu+tv +1)^{\alpha}
\quad \left(t, u, v\in\mathbb{R}_{++}\right)
\]
or
\[
\left|F(u, v)-F(v, u)-t^{\alpha}\left(g(tu)-g(tv)\right)\right|
\leq 3\varepsilon (u+v+t^{-1})^{\alpha}
\quad \left(t, u, v\in\mathbb{R}_{++}\right)
\]
whence
\[
\lim_{t\rightarrow 0}t^{-\alpha}\left(g(tu)-g(tv)\right)=F(u, v)-F(v, u)
\quad \left(t, u, v\in \mathbb{R}_{++}\right)
\]
follows.
Particularly, with $v=1$, by (\ref{Eq2.8}), we have that
\begin{equation}\label{Eq2.14}
g(u)=\lim_{t\rightarrow 0}t^{-\alpha}\left(g(tu)-g(t)\right).
\quad \left(u\in\mathbb{R}_{++}\right)
\end{equation}
Let now $u, v\in\mathbb{R}_{++}$. Then, by (\ref{Eq2.14}), we obtain that
\[
\begin{array}{rcl}
g(uv)&=& \lim_{t\rightarrow 0}t^{-\alpha}\left[g(tuv)-g(t)\right] \\
 &=&\lim_{t\rightarrow 0}\left[(tv)^{-\alpha}\left(g((tv)u)-g(tv)\right)v^{\alpha}+
 t^{-\alpha}(g(tv)-g(t))\right] \\
  &=&g(u)v^{\alpha}+g(v).
\end{array}
\]
Therefore, $g(u)v^{\alpha}+g(v)=g(v)u^{\alpha}+g(u)$, that is,
\[
g(u)\left(v^{\alpha}-1\right)=g(v)\left(u^{\alpha}-1\right)
\quad \left(u, v\in\mathbb{R}_{++}\right)
\]
which implies (\ref{Eq2.13}) with $c=g(2)\left(2^{\alpha}-1\right)^{-1}$.

Thus, by (\ref{Eq2.6}), (\ref{Eq2.13}), (\ref{Eq2.9}) and (\ref{Eq2.10}), we have that
\begin{equation}\label{Eq2.15}
\begin{array}{l}
\left|f(x)-c(1-x)^{\alpha}-\left(f(1-x)-cx^{\alpha}\right)\right| \\
=\left|F(1-x, x)+cx^{\alpha}-\left(F(x, 1-x)+c(1-x)^{\alpha}\right)\right| \\
=\left|G(1-x, x)-G(x, 1-x)\right|\leq 3\cdot 2^{\alpha}\varepsilon
\end{array}
\end{equation}
holds for all $x\in ]0, 1[$.

In the next step we define the functions $f_{0}$ and
$F_{0}$ on $]0, 1[$ and on $]0, 1[^{2}$ by
\begin{equation}\label{Eq2.16}
f_{0}(x)=f(x)-c\left[(1-x)^{\alpha}-1\right]
\end{equation}
and
\begin{equation}\label{Eq2.17}
F_{0}(p, q)=f_{0}(p)+p^{\alpha}f_{0}(q)-f_{0}(pq)-
(1-pq)^{\alpha}f_{0}\left(\frac{1-p}{1-pq}\right),
\end{equation}
respectively.
Then (\ref{Eq2.1}) and (\ref{Eq2.15}) imply that
\begin{equation}\label{Eq2.18}
\left|f_{0}(x)+(1-x)^{\alpha}f_{0}\left(\frac{y}{1-x}\right)
-f_{0}(y)-(1-y)^{\alpha}f_{0}\left(\frac{x}{1-y}\right)\right|\leq \varepsilon
\end{equation}
for all $(x, y)\in D^{\circ}$ and
\begin{equation}\label{Eq2.19}
\left|f_{0}(x)-f_{0}(1-x)\right|\leq 3\cdot 2^{\alpha}\varepsilon.
\quad \left(x\in ]0, 1[\right)
\end{equation}
Furthermore, with the substitutions $x=1-p$, $y=pq$ ($p, q\in ]0, 1[$), (\ref{Eq2.18})
implies that
\begin{equation}\label{Eq2.20}
\left|f_{0}(1-p)+p^{\alpha}f_{0}(q)-
f_{0}(pq)-(1-pq)^{\alpha}f_{0}\left(\frac{1-p}{1-pq}\right)\right|\leq\varepsilon
\end{equation}
holds for all $p, q\in ]0, 1[$. Therefore, due to (\ref{Eq2.19}) and the
triangle inequality, (\ref{Eq2.18}) implies that
\begin{equation}\label{Eq2.21}
\left|F_{0}(p, q)\right|\leq \left(3\cdot 2^{\alpha}+1\right)\varepsilon.
\quad \left(p, q\in ]0, 1[\right)
\end{equation}
It can easily be checked that
\begin{multline}\label{Eq*}
f_{0}(p)\left[q^{\alpha}+(1-q)^{\alpha}-1\right]-f_{0}(q)\left[p^{\alpha}+(1-p)^{\alpha}-1\right] \\
=F_{0}(q, p)-F_{0}(p, q)-(1-pq)^{\alpha}
\left[F_{0}\left(\frac{1-q}{1-pq}, p\right)+
f_{0}\left(1-\frac{1-p}{1-pq}\right)
-f_{0}\left(\frac{1-p}{1-pq}\right)\right]
\end{multline}
holds for all $p, q\in ]0, 1[$.
Thus, by (\ref{Eq2.21}) and (\ref{Eq2.19}) we get that

\begin{multline*}
\left|f_{0}(p)-\frac{f_{0}(q)}{q^{\alpha}+(1-q)^{\alpha}-1}
\left[p^{\alpha}+(1-p)^{\alpha}-1\right]\right| \\
\leq
\frac{2(1+3\cdot 2^{\alpha})+(1-pq)^{\alpha}(1+6\cdot 2^{\alpha})}{q^{\alpha}+(1-q)^{\alpha}-1}\varepsilon.
\quad \left(p, q\in ]0, 1[\right)
\end{multline*}
Taking into consideration (\ref{Eq2.16}), with $q=\frac{1}{2}$ with the definitions
$a=f_{0}\left(\frac{1}{2}\right)\left(2^{1-\alpha}-1\right)^{-1}$, $b=a+c$,
this inequality implies that
\[
\left|f(x)-\left[ax^{\alpha}+b(1-x)^{\alpha}-b\right]\right| \leq
\frac{8+6\cdot 2^{\alpha}+2^{-\alpha}}{2^{1-\alpha}-1}\varepsilon.
\quad  \left(x\in ]0, 1[\right)
\]
In view of the definition of
$K(\alpha)$, this implies that inequality (\ref{Eq2.3}) holds for all
$x\in ]0, 1[$.

\underline{\textsc{Case II.}} ($\alpha=0$)

In the second case we will show that there exists a logarithmic function
$l:\mathbb{R}_{++}\rightarrow\mathbb{R}$ such that
\[
\left|g(u)-l(u)\right|\leq 6\varepsilon
\]
for all $u\in\mathbb{R}_{++}$.
Indeed, (\ref{Eq2.10}) yields in this case that
\[
\left|G(u, v)-G(v, u)\right|\leq 3\varepsilon.
\quad \left(u, v\in \mathbb{R}_{++}\right)
\]
Due to (\ref{Eq2.5}) and (\ref{Eq2.9}) we obtain that
\[
\begin{array}{rcl}
G(tu, tv)&=&F(tu, tv)+g(tv)\\
&=&F(u, v)+g(tv)\\
&=&G(u, v)-g(v)+g(tv)
\end{array}
\]
that is,
\[
G(tu, tv)-G(u, v)=g(tv)-g(v),  \quad \left(t, u, v\in\mathbb{R}_{++}\right)
\]
therefore
\begin{equation}\label{Eq2.22}
\begin{array}{l}
\left|g(tv)-g(v)+g(u)-g(tu)\right| \\
=\left|G(tu, tv)-G(u, v)-G(tv, tu)+G(v, u)\right| \\
\leq \left|G(tu, tv)-G(tv, tu)\right|+\left|G(v, u)-G(u, v)\right|
\leq 6\varepsilon
\end{array}
\end{equation}
for all $t, u, v\in\mathbb{R}_{++}$. Now (\ref{Eq2.22}) with the substitution
$u=1$ implies that
\[
\left|g(tv)-g(v)-g(t)\right|\leq 6\varepsilon
\]
holds for all $t, v\in\mathbb{R}_{++}$, since obviously $g(1)=0$.
This means that the function $g$ is approximately logarithmic on
$\mathbb{R}_{++}$. Thus (see e.g. Forti \cite{For95}) there exists
a logarithmic function $l:\mathbb{R}_{++}\rightarrow\mathbb{R}$ such that
\[
\left|g(u)-l(u)\right|\leq 6\varepsilon
\]
holds for all $u\in \mathbb{R}_{++}$.

Furthermore,
\begin{equation}\label{Eq2.23}
\begin{array}{l}
\left|f(x)-l(1-x)-\left(f(1-x)-l(x)\right)\right| \\
=\left|F(1-x, x)-l(1-x)-F(x, 1-x)+l(x)\right| \\
=\left|F(1-x, x)+g(x)-g(x)-l(1-x)\right. \\
-F(x, 1-x)+g(1-x)-g(1-x)+l(x)\left.\right| \\
\leq \left|F(1-x, x)+g(x)-\left(F(x, 1-x)+g(1-x)\right)\right| \\
+\left|g(1-x)-l(1-x)\right|+\left|l(x)-g(x)\right| \\
=\left|G(1-x, x)-G(x, 1-x)\right|\\
+\left|g(1-x)-l(1-x)\right|+\left|l(x)-g(x)\right| \\
\leq 3\varepsilon+6\varepsilon+6\varepsilon=15 \varepsilon
\end{array}
\end{equation}
As in the first part of the proof, define the functions $f_{0}$ and
$F_{0}$ on $]0, 1[$ and on $]0, 1[^{2}$, respectively, by
\[
f_{0}(x)=f(x)-l(1-x)
\]
and
\[
F_{0}(p, q)=f_{0}(p)+f_{0}(q)-f_{0}(pq)-f_{0}\left(\frac{1-p}{1-pq}\right)
\]
Due to (\ref{Eq2.23})
\begin{equation}\label{Eq2.24}
\left|f_{0}(x)-f_{0}(1-x)\right|\leq 15\varepsilon
\end{equation}
holds for all $x\in ]0, 1[$.
Furthermore, with the substitutions $x=1-p$, $y=pq$ ($p, q\in ]0, 1[$)
inequality (\ref{Eq2.1}) implies, that
\begin{equation}\label{Eq2.25}
\left|f_{0}(1-p)+f_{0}(q)-
f_{0}(pq)-f_{0}\left(\frac{1-p}{1-pq}\right)\right|\leq\varepsilon
\end{equation}
is fulfilled for all $p, q\in ]0, 1[$.
Inequalities (\ref{Eq2.24})  and (\ref{Eq2.25}) and the triangle inequality
imply that
\begin{equation}\label{Eq2.26}
\left|F_{0}(p, q)\right|\leq 16\varepsilon
\end{equation}
for all $p, q\in ]0, 1[$.
An easy calculation shows that
\begin{multline*}
f_{0}(p)-f_{0}(q)\\
=F_{0}(q, p)-F_{0}(p, q)+F_{0}\left(\frac{1-p}{1-pq}, p\right)-
f_{0}\left(1-\frac{1-p}{1-pq}\right)+f_{0}\left(\frac{1-p}{1-pq}\right)
\end{multline*}
therefore,
\begin{equation}\label{Eq2.27}
\begin{array}{l}
\left|f_{0}(p)-f_{0}(q)\right|\\
\leq
\left|F_{0}(q, p)\right|+\left|F_{0}(p, q)\right|+
\left|F_{0}\left(\frac{1-p}{1-pq}, p\right)\right|+
\left|f_{0}\left(1-\frac{1-p}{1-pq}\right)-f_{0}\left(\frac{1-p}{1-pq}\right)\right| \\
\leq 3\cdot 16\varepsilon+15\varepsilon=63\varepsilon
\end{array}
\end{equation}
holds for all $p, q\in ]0, 1[$.
With the substitution $q=\frac{1}{2}$ inequality (\ref{Eq2.27}) implies that
\[
\left|f_{0}(p)-f_{0}\left(\frac{1}{2}\right)\right| \leq 63\varepsilon.
\quad \left(p\in ]0, 1[\right)
\]
Using the definition of the function $f_{0}$, we obtain that inequality
\[
\left|f(x)-l(1-x)-c\right|\leq 63\varepsilon
\]
is satisfied for all $x\in]0, 1[$, where $c=f_{0}\left(\frac{1}{2}\right)$.
Hence inequality (\ref{Eq2.2}) holds, indeed.

\underline{\textsc{Case III.}} ($1\neq\alpha>0$)

Finally, in the last case, we will prove that
there exists $c\in\mathbb{R}$ such that
\[
\left|g(x)-c(x^{\alpha}-1)\right|
\leq \frac{4\cdot 3^{\alpha+1}\varepsilon}{\left|2^{-\alpha}-1\right|}
\]
holds for all $x\in ]0, 1[$.

Due to inequalities (\ref{Eq2.4}) and (\ref{Eq2.8}),
\[
\begin{array}{lcl}
G(tu, tv)&=&F(tu, tv)+g(tv)\\
 &=&t^{\alpha}F(u, v)+g(tv)\\
 &=&t^{\alpha}G(u, v)-t^{\alpha}g(v)+g(tv),
\end{array}
\]
that is,
\[
G(tu, tv)-t^{\alpha}G(u, v)=g(tv)-t^{\alpha}g(v)
\]
holds for all $t, v\in\mathbb{R}_{++}$.
Therefore,
\begin{equation}\label{Eq2.28}
\begin{array}{l}
\left|g(tv)-t^{\alpha}g(v)+t^{\alpha}g(u)-g(tu)\right| \\
=\left|G(tu, tv)-G(u, v)-G(tv, tu)+G(v, u)\right| \\
\leq \left|G(tu, tv)-G(tv, tu)\right|+\left|G(u, v)-G(v, u)\right| \\
\leq 3\varepsilon(t(u+v)+1)^{\alpha}+3\varepsilon(u+v+1)^{\alpha}
\end{array}
\end{equation}
holds for all $t, u, v\in\mathbb{R}_{++}$, where we used (\ref{Eq2.10}).
With the substitution $u=1$, (\ref{Eq2.28}) implies that
\begin{multline}\label{Eq2.29}
\left|g(tv)-t^{\alpha}g(v)-g(t)\right| \\
\leq 3\varepsilon(t(v+1)+1)^{\alpha}+3\varepsilon(v+2)^{\alpha}
\quad \left(t, v\in\mathbb{R}_{++}\right)
\end{multline}
Interchanging $t$ and $v$ in (\ref{Eq2.29}), we obtain that
\begin{multline}\label{Eq2.30}
\left|g(tv)-v^{\alpha}g(t)-g(v)\right| \\
\leq 3\varepsilon(v(t+1)+1)^{\alpha}+3\varepsilon(t+2)^{\alpha}
\quad \left(t, v\in\mathbb{R}_{++}\right)
\end{multline}
Inequalities (\ref{Eq2.29}), (\ref{Eq2.30}) and the triangle inequality
imply that
\begin{equation}\label{Eq2.31}
\left|t^{\alpha}g(v)+g(t)-v^{\alpha}g(t)-g(v)\right|
\leq B(t, v)
\end{equation}
is fulfilled for all $t, v\in\mathbb{R}_{++}$, where
\[
\begin{array}{lcl}
B(t, v)&=& 3\varepsilon(t(v+1)+1)^{\alpha}+3\varepsilon(v+2)^{\alpha} \\
 &+&3\varepsilon(v(t+1)+1)^{\alpha}+3\varepsilon(t+2)^{\alpha}.
\end{array}
\]
With the substitution $t=\frac{1}{2}$ and with the definition
$c=\frac{g\left(\frac{1}{2}\right)}{2^{-\alpha}-1}$, we obtain
\begin{equation}\label{Eq2.32}
\left|g(v)-c(v^{\alpha}-1)\right|\leq \frac{B\left(\frac{1}{2}, v\right)}{\left|2^{-\alpha}-1\right|}
\end{equation}
for all $v\in\mathbb{R}_{++}$.

Let us observe that
\[
\left|B(t, v)\right|\leq 4\cdot 3^{\alpha+1}\varepsilon
\]
holds, if $t, v\in ]0, 1[$.
Thus
\begin{equation}\label{Eq2.33}
\left|g(v)-c(v^{\alpha}-1)\right|\leq \frac{B\left(\frac{1}{2}, v\right)}{\left|2^{-\alpha}-1\right|}
\leq \frac{4\cdot 3^{\alpha+1}\varepsilon}{\left|2^{-\alpha}-1\right|}
\end{equation}
for all $v\in ]0, 1[$.
Therefore (\ref{Eq2.6}), (\ref{Eq2.9}), (\ref{Eq2.10}), (\ref{Eq2.33}) and the
triangle inequality imply that
\begin{equation}\label{Eq2.34}
\begin{array}{l}
\left|f(x)-c(1-x)^{\alpha}+c-\left(f(1-x)-cx^{\alpha}+c\right)\right| \\
=\left|F(1-x, x)-c(1-x)^{\alpha}+c-\left(F(x, 1-x)-cx^{\alpha}+c\right)\right| \\
\leq \left|F(1-x, x)+g(x)-F(x, 1-x)-g(1-x)\right| \\
+ \left|g(x)-c(x^{\alpha}-1)\right|+\left|g(1-x)-c((1-x)^{\alpha}-1)\right| \\
=\left|G(1-x, x)-G(x, 1-x)\right| \\
+ \left|g(x)-c(x^{\alpha}-1)\right|+\left|g(1-x)-c((1-x)^{\alpha}-1)\right| \\
\leq 3\cdot 2^{\alpha}\varepsilon+\frac{8\cdot 3^{\alpha+1}\varepsilon}{\left|2^{-\alpha}-1\right|}
\end{array}
\end{equation}
holds for all $x\in ]0, 1[$.

As in the previous cases, we define the functions $f_{0}$ and
$F_{0}$ on $]0, 1[$ and on $]0, 1[^{2}$ by
\begin{equation}\label{Eq2.35}
f_{0}(x)=f(x)-c(1-x)^{\alpha}
\end{equation}
and
\begin{equation}\label{Eq2.36}
F_{0}(p, q)=f_{0}(p)+p^{\alpha}f_{0}(q)-f_{0}(pq)-
(1-pq)^{\alpha}f_{0}\left(\frac{1-p}{1-pq}\right),
\end{equation}
respectively.
Then (\ref{Eq2.1}), (\ref{Eq2.34}) and (\ref{Eq2.35}) imply that
\begin{equation}\label{Eq2.37}
\left|f_{0}(x)+(1-x)^{\alpha}f_{0}\left(\frac{y}{1-x}\right)
-f_{0}(y)-(1-y)^{\alpha}f_{0}\left(\frac{x}{1-y}\right)\right|\leq \varepsilon
\end{equation}
for all $(x, y)\in D^{\circ}$ and
\begin{equation}\label{Eq2.38}
\left|f_{0}(x)-f_{0}(1-x)\right|\leq 3\cdot 2^{\alpha}\varepsilon+\frac{8\cdot 3^{\alpha+1}\varepsilon}{\left|2^{-\alpha}-1\right|}.
\quad \left(x\in ]0, 1[\right)
\end{equation}
Furthermore, with the substitutions $x=1-p$, $y=pq$ ($p, q\in ]0, 1[$), (\ref{Eq2.37})
implies that
\begin{equation}\label{Eq2.39}
\left|f_{0}(1-p)+p^{\alpha}f_{0}(q)-
f_{0}(pq)-(1-pq)^{\alpha}f_{0}\left(\frac{1-p}{1-pq}\right)\right|\leq\varepsilon
\end{equation}
holds for all $p, q\in ]0, 1[$.
Thus (\ref{Eq2.38}) and (\ref{Eq2.39}) and the triangle
inequality imply that
\[
\left|F_{0}(p, q)\right|\leq \varepsilon+3\cdot 2^{\alpha}\varepsilon+\frac{8\cdot 3^{\alpha+1}\varepsilon}{\left|2^{-\alpha}-1\right|} .
\quad \left(x\in ]0, 1[\right)
\]
As in the previous cases, it is easy to see that the identity (\ref{Eq*})
is satisfied for all $p, q\in ]0, 1[$. Therefore
\begin{multline*}
\left|f_{0}(p)-\frac{f_{0}(q)}{q^{\alpha}+(1-q)^{\alpha}-1}
\left[p^{\alpha}+(1-p)^{\alpha}-1\right]\right| \\
\leq \left|q^{\alpha}+(1-q)^{\alpha}-1\right|^{-1}\left(3\left(\varepsilon+3\cdot 2^{\alpha}\varepsilon+\frac{8\cdot 3^{\alpha+1}\varepsilon}{\left|2^{-\alpha}-1\right|}\right)+
3\cdot 2^{\alpha}\varepsilon+\frac{8\cdot 3^{\alpha+1}\varepsilon}{\left|2^{-\alpha}-1\right|}\right)
\end{multline*}

for all $p, q\in ]0, 1[$.
In view of (\ref{Eq2.35}), with $q=\frac{1}{2}$ with the definitions
\[
a=f_{0}\left(\frac{1}{2}\right)\left(2^{1-\alpha}-1\right)^{-1}\quad  \text{and } \quad
b=a+c,
\]
this inequality implies that
\begin{equation}
\left|f(p)-\left[ap^{\alpha}+b(1-p)^{\alpha}-b\right]\right|\leq K(\alpha)\varepsilon
\end{equation}
holds for all $p\in ]0, 1[$, where
\[
K(\alpha)=
\left|2^{1-\alpha}-1\right|^{-1}
\left(3+12\cdot2^{\alpha}+\frac{32\cdot3^{\alpha+1}}{\left|2^{-\alpha}-1\right|}\right),
\]
which had to be proved.
\end{proof}

\section{Corollaries and remarks}

In the last part of the paper, first we explain, why our method does not work, in case $\alpha=1$.
\begin{Rem}
Since
\[
\lim_{\alpha\rightarrow 1}K(\alpha)=+\infty,
\]
our method is inappropriate if $\alpha=1$. Hence we cannot prove stability
concerning the fundamental equation of information on the set $D^{\circ}$.

An easy calculation shows that
\[
\sup_{\alpha<0}K(\alpha)=\sup_{\alpha<0}\frac{8+6\cdot 2^{\alpha}+2^{-\alpha}}{2^{1-\alpha}-1}=15,
\]
therefore, in case $\alpha<0$, $15$ can also be considered as a stability constant.
\end{Rem}

Using Theorem \ref{Thm2.1}., with the choice $\varepsilon=0$,
we get the general solution of equation (\ref{Eq1.1})
(see Ebanks--Sahoo--Sander \cite{ESS98} or Maksa \cite{Mak82}).
\begin{Cor}\label{C1}
Let $\alpha\neq 1$ be arbitrary but fixed real number and assume
that the function $f:]0, 1[\rightarrow\mathbb{R}$ satisfies the
functional equation
\[
f(x)+(1-x)^{\alpha}f\left(\frac{y}{1-x}\right)=
f(y)+(1-x)^{\alpha}f\left(\frac{x}{1-y}\right)
\]
for all pairs $(x, y)\in D^{\circ}$. Then, and only then,
in case $\alpha=0$, there exists a logarithmic function
$l:]0, 1[\rightarrow\mathbb{R}$ and $c\in\mathbb{R}$ such that
\[
f(x)=l(1-x)+c,
\quad \left(x\in ]0, 1[\right)
\]
furthermore, in case $\alpha\notin \left\{0, 1\right\}$, there exist
$a, b\in\mathbb{R}$ such that
\[
f(x)=ax^{\alpha}+b(1-x)^{\alpha}-b
\]
holds for all $x\in ]0, 1[$.
\end{Cor}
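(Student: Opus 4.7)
The plan is to observe that Corollary \ref{C1} is an \emph{if and only if} statement and to handle each direction separately; the nontrivial direction is an immediate specialization of Theorem \ref{Thm2.1}, while the converse is a direct verification.

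For the necessity direction, I would simply set $\varepsilon=0$ in Theorem \ref{Thm2.1}. Since $f$ satisfies equation (\ref{Eq1.1}) exactly, inequality (\ref{Eq2.1}) holds with $\varepsilon=0$. The bound $K(\alpha)\varepsilon$ on the right-hand side of (\ref{Eq2.2}) (in the case $\alpha=0$) and of (\ref{Eq2.3}) (in the case $\alpha\notin\{0,1\}$) then collapses to zero, so the approximation becomes an equality. Thus one obtains at once a logarithmic function $l:\,]0,1[\,\to\mathbb{R}$ and $c\in\mathbb{R}$ with $f(x)=l(1-x)+c$ when $\alpha=0$, and real constants $a,b$ with $f(x)=ax^{\alpha}+b(1-x)^{\alpha}-b$ when $\alpha\notin\{0,1\}$. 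No additional work is needed here.

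For the sufficiency direction, I would plug the candidate forms into equation (\ref{Eq1.1}) and verify the identity pointwise on $D^{\circ}$. In the case $\alpha\notin\{0,1\}$, substituting $f(x)=ax^{\alpha}+b(1-x)^{\alpha}-b$ gives
\[
f(x)+(1-x)^{\alpha}f\!\left(\tfrac{y}{1-x}\right)=ax^{\alpha}+ay^{\alpha}+b(1-x-y)^{\alpha}-b,
\]
which is symmetric in $x$ and $y$, so it equals the right-hand side of (\ref{Eq1.1}). In the case $\alpha=0$, substituting $f(x)=l(1-x)+c$ reduces the left-hand side of (\ref{Eq1.1}) to $l(1-x)+l\!\left(\tfrac{1-x-y}{1-x}\right)+2c$; by the logarithmic equation (\ref{Eq1.4}) applied to the pair $\bigl(1-x,\tfrac{1-x-y}{1-x}\bigr)$, which indeed lies in $\mathcal{L}$ for $(x,y)\in D^{\circ}$, this collapses to $l(1-x-y)+2c$, again a symmetric expression in $x$ and $y$.

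There is essentially no obstacle in this argument: the real content is encapsulated in Theorem \ref{Thm2.1}, and the converse is a short computation. The only minor point to watch is that, in the converse for $\alpha=0$, one must verify that the arguments of $l$ used in the logarithmic identity stay inside $]0,1[$, which follows at once from $(x,y)\in D^{\circ}$.
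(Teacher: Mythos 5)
Your proposal is correct and matches the paper's approach: the paper derives Corollary \ref{C1} precisely by taking $\varepsilon=0$ in Theorem \ref{Thm2.1}, citing the known solution literature for the rest. Your explicit verification of the converse direction (including the check that the arguments of $l$ stay in $]0,1[$) is a detail the paper leaves implicit, and your computations there are correct.
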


\begin{Rem}
In view of Corollary \ref{C1}., Theorem \ref{Thm2.1}. says that the
parametric fundamental equation of information is stable on the
open domain in the sense of Hyers and Ulam, provided that the
parameter does not equal to one.
\end{Rem}

\begin{Rem}
Let us observe that the solutions of (\ref{Eq1.1}) are bounded on
$D^{\circ}$, assuming that $1 \neq \alpha>0$. Therefore
Theorem \ref{Thm2.1}. means that the parametric
fundamental equation of information is not only stable but also superstable
in this case(as to the superstability,
the reader can consult Ger \cite{Ger94} and Moszner \cite{Mos04}).
\end{Rem}

In the following theorem we shall prove that equation
(\ref{Eq1.1}) is stable not only on $D^{\circ}$ but also
on $D$. During the proof of this theorem the following function
will be needed.
For all $1\neq \alpha >0$ we define the function $T(\alpha)$ by
\[
T(\alpha)= 3\cdot 2^{\alpha} +\frac{8\cdot 3^{\alpha+1}}{\left|2^{-\alpha}-1\right|},
\]
that is, $T(\alpha)$ is that function which appears in inequality \eqref{Eq2.38}.
Furthermore, the following relationship is fulfilled between $K(\alpha)$ and $T(\alpha)$
\[
K(\alpha)=\frac{4T(\alpha)+3}{\left|2^{1-\alpha}-1\right|}
\]
for all $1\neq \alpha>0$.
\begin{Thm}\label{Thm3.1}
Let $\alpha, \varepsilon\in\mathbb{R}$ be fixed, $\alpha\neq 1$, $\varepsilon\geq 0$.
Suppose that the function $f:[0,1]\rightarrow\mathbb{R}$ satisfies inequality
(\ref{Eq2.1}) for all $(x, y)\in D$.
Then, in case $\alpha\neq 0$ there
exist $a, b\in\mathbb{R}$ such that the function $h_{1}$
defined on $[0, 1]$ by
\[
h_{1}(x)=\left\{
\begin{array}{lcl}
0, & \text{if} & x=0\\
ax^{\alpha}+b(1-x)^{\alpha}-b, &\text{if}& x\in \left.]0, 1[\right. \\
a-b, & \text{if} & x=1
\end{array}
\right.
\]
is a solution of (\ref{Eq1.1}) on $D$ and
\begin{equation}\label{Eq3.1}
\left|f(x)-h_{1}(x)\right|\leq K(\alpha)\varepsilon,
\quad \left(x\in [0, 1]\right)
\end{equation}
holds if $\alpha<0$ and
\begin{equation}\label{Eq4.2}
\left|f(x)-h_{1}(x)\right|\leq \max\left\{K(\alpha), T(\alpha)+1\right\}\varepsilon,
\quad \left(x\in [0, 1]\right)
\end{equation}
is satisfied in case $1\neq\alpha>0$.
Furthermore, in case $\alpha=0$, there exists $c\in\mathbb{R}$ such
that the function $h_{2}$ defined on $[0, 1]$ by
\[
h_{2}(x)=\left\{
\begin{array}{lcl}
f(0), & \text{if} & x=0\\
c, &\text{if}& x\in \left.]0, 1[\right. \\
f(1), & \text{if} & x=1
\end{array}
\right.
\]
is a solution of (\ref{Eq1.1}) on $D$ and
\begin{equation}\label{Eq3.2}
\left|f(x)-h_{2}(x)\right|\leq K(\alpha)\varepsilon.
\quad \left(x\in [0, 1]\right)
\end{equation}
\end{Thm}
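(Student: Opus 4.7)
The plan is to reduce Theorem~\ref{Thm3.1} to Theorem~\ref{Thm2.1}: first apply the latter to the restriction $f|_{]0,1[}$ to secure the stability estimate on the open domain, and then exploit the extra pairs available in $D\setminus D^{\circ}$ to pin down the endpoint values $f(0)$ and $f(1)$. Since $f|_{]0,1[}$ satisfies (\ref{Eq2.1}) on $D^{\circ}$, Theorem~\ref{Thm2.1} furnishes the constants $a,b$ (respectively $l,c$ when $\alpha=0$) for which the corresponding estimate (\ref{Eq2.2}) or (\ref{Eq2.3}) holds on $]0,1[$; this handles the interior of $[0,1]$ at the cost $K(\alpha)\varepsilon$.

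To extract information at the endpoints I would use two substitutions in (\ref{Eq2.1}) admissible on $D$ but not on $D^{\circ}$. Putting $y=0$ yields $|f(0)((1-x)^{\alpha}-1)|\leq\varepsilon$ for every $x\in[0,1[$: for $\alpha<0$ the factor $(1-x)^{\alpha}$ is unbounded as $x\to 1^{-}$, forcing $f(0)=0=h_{1}(0)$; for $\alpha>0$ letting $x\to 1^{-}$ gives $|f(0)|\leq\varepsilon$; for $\alpha=0$ the identity is vacuous and I simply declare $h_{2}(0)=f(0)$. Putting $y=1-x$ (so that $y/(1-x)=x/(1-y)=1$) yields
\[
|f(x)-f(1-x)+((1-x)^{\alpha}-x^{\alpha})f(1)|\leq\varepsilon,\qquad x\in\,]0,1[.
\]
Combined with the interior estimate $|f_{0}(x)-f_{0}(1-x)|\leq 3\cdot 2^{\alpha}\varepsilon$ from (\ref{Eq2.19}) (case $\alpha<0$) or $\leq T(\alpha)\varepsilon$ from (\ref{Eq2.38}) (case $\alpha>0$), together with the observation that in both cases $f_{0}(x)-f_{0}(1-x)=f(x)-f(1-x)+c(x^{\alpha}-(1-x)^{\alpha})$, the triangle inequality produces
\[
|(x^{\alpha}-(1-x)^{\alpha})(c+f(1))|\leq(3\cdot 2^{\alpha}+1)\varepsilon\ \text{or}\ (T(\alpha)+1)\varepsilon.
\]
For $\alpha<0$ the prefactor is unbounded as $x\to 1^{-}$, so $f(1)=-c=a-b=h_{1}(1)$ and the bound $K(\alpha)\varepsilon$ propagates to the whole of $[0,1]$. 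For $\alpha>0$, because $\sup_{x\in\,]0,1[}|x^{\alpha}-(1-x)^{\alpha}|=1$, one obtains $|f(1)-h_{1}(1)|\leq (T(\alpha)+1)\varepsilon$, which accounts for the constant $\max\{K(\alpha),T(\alpha)+1\}$.

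The delicate case is $\alpha=0$: here the $y=1-x$ substitution gives only $|f(x)-f(1-x)|\leq\varepsilon$, which combined with Theorem~\ref{Thm2.1} produces $|l(x)-l(1-x)|\leq 127\varepsilon$ for every $x\in\,]0,1[$. Writing $l(x)=L(-\log x)$ and extending $L$ to an additive function $\tilde L$ on $\mathbb{R}$, and noticing that $\log((1-x)/x)$ sweeps out all of $\mathbb{R}$ as $x$ ranges over $]0,1[$, I would obtain $|\tilde L(s)|\leq 127\varepsilon$ for every $s\in\mathbb{R}$. The classical identity $\tilde L(ns)=n\tilde L(s)$ for $n\in\mathbb{N}$ forces any bounded additive function on $\mathbb{R}$ to vanish identically, whence $l\equiv 0$. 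Therefore $|f(x)-c|\leq 63\varepsilon$ on $]0,1[$, and putting $h_{2}(0)=f(0)$, $h_{2}(1)=f(1)$ closes the estimate.

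Finally I would verify that $h_{1},h_{2}$ solve (\ref{Eq1.1}) on $D$. On $D^{\circ}$ this is Corollary~\ref{C1}; the faces $x=0$ and $y=0$ reduce (\ref{Eq1.1}) to a tautology because $h_{1}(0)=0$ (or $h_{2}(0)=f(0)$ enters symmetrically), and on $x+y=1$ both sides match once one uses $h_{1}(1)=a-b$ together with the identity $h_{1}(x)-h_{1}(1-x)=(a-b)(x^{\alpha}-(1-x)^{\alpha})$ (and analogously for $h_{2}$). I expect the forcing of $l\equiv 0$ in the $\alpha=0$ case to be the main technical step; the remainder is bookkeeping of the constants supplied by Theorem~\ref{Thm2.1}.
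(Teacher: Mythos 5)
Your proposal is correct and follows essentially the same route as the paper: apply Theorem~\ref{Thm2.1} on the open interval, then use the substitutions $y=0$ and $y=1-x$ (available on $D$ but not $D^{\circ}$) together with the unboundedness of $x^{\alpha}$ near the endpoints for $\alpha<0$, the bound $\sup\left|x^{\alpha}-(1-x)^{\alpha}\right|=1$ for $\alpha>0$, and the vanishing of bounded logarithmic (equivalently additive) functions for $\alpha=0$. The only deviations are cosmetic, e.g.\ the paper pins down $f(1)$ for $\alpha<0$ by applying \eqref{Eq2.3} twice rather than via \eqref{Eq2.19}, and phrases the $\alpha=0$ step directly for logarithmic functions.
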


\begin{proof}
An easy calculation shows that the functions
$h_{1}$ and $h_{2}$ are the solutions of equation (\ref{Eq1.1})
on $D$ in case $\alpha\neq 0$ and $\alpha=0$, respectively.

Firstly, we investigate the case $\alpha<0$.
Theorem \ref{Thm2.1}. implies that (\ref{Eq3.1}) holds for all
$x\in ]0, 1[$.
Thus it is enough to prove that (\ref{Eq3.1}) holds for $x=0$ and $x=1$.
It follows from (\ref{Eq2.1}), with the substitution $y=0$, that
\[
\left|(1-x)^{\alpha}-1\right|\cdot \left|f(0)\right|\leq \varepsilon.
\quad \left(x\in ]0, 1[\right)
\]
Since $\alpha<0$, $f(0)=0$ follows, that is, (\ref{Eq3.1})
holds in case $x=0$.

Let now $x\in ]0, 1[$ and $y=1-x$ in (\ref{Eq2.1}).
Then
\[
\left|f(1-x)-f(x)-f(1)\left((1-x)^{\alpha}-x^{\alpha}\right)\right|\leq \varepsilon.
\]
Applying (\ref{Eq2.3}) to $1-x$ instead of $x$ to get
\[
\left|-f(1-x)+a(1-x)^{\alpha}+bx^{\alpha}-b\right|\leq K(\alpha)\varepsilon.
\]
Adding this last two inequalities and inequality (\ref{Eq2.3}) up
and using the triangle inequality  to obtain
\[
\left|(a-b)-f(1)\right|\cdot \left|(1-x)^{\alpha}-x^{\alpha}\right|\leq \left(2K(\alpha)+1\right)\varepsilon.
\quad \left(x\in ]0, 1[\right)
\]
Since $\alpha<0$, we get that $f(1)=a-b$ and so (\ref{Eq3.1}) holds also for $x=1$.

Secondly, we deal with the case $\alpha>0$.
Substituting $x=0$ into (\ref{Eq2.1}) and with $y\rightarrow 0$ we obtain that
\[
\left|f(0)\right|\leq \varepsilon \leq K(\alpha) \varepsilon,
\]
that is, (\ref{Eq3.1}) holds for $x=0$.
If $x\in ]0, 1[$, then inequality \eqref{Eq4.2} follows immediately from
Theorem \ref{Thm2.1}.
Furthermore, with the substitution $y=1-x$ ($x\in ]0, 1[$) inequality \eqref{Eq2.1} implies that
\[
\left|f(x)+(1-x)^{\alpha}f(1)-f(1-x)-x^{\alpha}f(1)\right|\leq \varepsilon .
\quad \left(x\in ]0, 1[\right)
\]
From the proof of Theorem \ref{Thm2.1} (see definition \eqref{Eq2.35}) it is known that
\[
f(x)=f_{0}(x)+c(1-x)^{\alpha} ,
\quad \left(x\in ]0, 1[\right)
\]
therefore the last inequality yields that
\begin{equation}\label{Eq4.3}
\left|f_{0}(x)-f_{0}(1-x)+c(1-x)^{\alpha}-cx^{\alpha}+(1-x)^{\alpha}f(1)-x^{\alpha}f(1)\right|\leq \varepsilon
\end{equation}
holds for all $x\in ]0, 1[$.
Whereas
\[
\left|f_{0}(x)-f_{0}(1-x)\right|\leq T(\alpha).
\quad \left(x\in ]0, 1[\right)
\]
Thus after rearranging \eqref{Eq4.3}, we get that
\[
\left|f_{0}(x)-f_{0}(1-x)-[c+f(1)][x^{\alpha}-(1-x)^{\alpha}]\right|\leq \varepsilon,
\quad \left(x\in ]0, 1[\right)
\]
that is,
\[
\left|\left|f_{0}(x)-f_{0}(1-x)\right|-\left|c+f(1)\right|\cdot \left|x^{\alpha}-(1-x)^{\alpha}\right|\right|
\leq \varepsilon
\]
holds for all $x\in ]0, 1[$. Therefore
\[
\left|c+f(1)\right|\cdot \left|x^{\alpha}-(1-x)^{\alpha}\right|\leq (T(\alpha)+1)\varepsilon
\]
for all $x\in ]0, 1[$.
Taking the limit $x\rightarrow 0+$, we obtain that
\[
\left|c+f(1)\right|\leq (T(\alpha)+1)\varepsilon.
\]
However, in the proof of Theorem \ref{Thm2.1}. we used the definition $c=b-a$, thus
\[
\left|f(1)-(a-b)\right|\leq (T(\alpha)+1)\varepsilon,
\]
so \eqref{Eq4.2} holds, indeed.

Finally, we investigate the case $\alpha=0$.
If $x=0$ or $x=1$, then \eqref{Eq3.2} trivially holds, since
\[
\left|f(0)-h_{2}(0)\right|=\left|f(0)-f(0)\right|=0\leq K(\alpha)\varepsilon
\]
and
\[
\left|f(1)-h_{2}(1)\right|=\left|f(1)-f(1)\right|=0\leq K(\alpha)\varepsilon .
\]
Let now $x\in ]0, 1[$ and $y=1-x$ in (\ref{Eq2.1}), then we obtain that
\begin{equation}\label{Eq3.3}
\left|f(x)-f(1-x)\right|\leq \varepsilon,  \quad \left(x\in ]0, 1[\right)
\end{equation}
if fulfilled for all $x\in ]0, 1[$.

Due to Theorem \ref{Thm2.1}. there
exists a logarithmic function $l:]0, 1[\rightarrow\mathbb{R}$ and
$c\in\mathbb{R}$ such that
\begin{equation}\label{Eq3.4}
\left|f(x)-l(1-x)-c\right|\leq 63\varepsilon
\end{equation}
holds for all $x\in ]0, 1[$. Hence it is enough to prove that the
function $l$ is identically zero on $]0, 1[$.
Indeed, due to (\ref{Eq2.2}) and (\ref{Eq3.3})
\begin{multline}\label{Eq3.5}
\left|l(1-x)-l(x)\right| \\
=\left|l(1-x)-f(1-x)+f(1-x)+c
-l(x)+f(x)-f(x)-c\right| \\
\leq
\left|l(1-x)+c-f(x)\right|+\left|f(1-x)-l(x)-c\right|
+\left|f(x)-f(1-x)\right| \\
\leq 127 \varepsilon
\end{multline}
holds for all $x\in ]0, 1[$. Since the function $l$
is uniquely extendable to $\mathbb{R}_{++}$, with the
substitution $x=\frac{p}{p+q}$ ($p, q\in\mathbb{R}$), we get that
\[
\left|l(p)-l(q)\right|\leq 127\varepsilon,  \quad \left(p, q\in \mathbb{R}_{++}\right)
\]
where we used the fact that $l$ is logarithmic, as well.
This last inequality, with the substitution $q=1$ implies that
\[
\left|l(p)\right|\leq 127\varepsilon
\]
holds for all $p\in\mathbb{R}_{++}$, since $l(1)=0$. Thus
$l$ is bounded on $\mathbb{R}_{++}$.
However, the only bounded, logarithmic function on
$\mathbb{R}_{++}$ is the identically zero function.
Therefore,
\[
\left|f(x)-c\right|\leq 63\varepsilon
\]
holds for all $x\in ]0, 1[$, i.e., (\ref{Eq3.2}) is proved.
\end{proof}

Applying Theorem \ref{Thm2.1}. we can prove the stability of a
system of functional equations that characterizes the
$\alpha$-recursive, $3$-semi-symmetric information measures.

\begin{Thm}\label{Thm3.2}
Let $n\geq 2$ be a fixed positive integer
and $(I_{n})$ be the sequence of functions
$I_{n}:\Gamma^{\circ}_{n}\rightarrow\mathbb{R}$
and suppose that there exist a sequence $(\varepsilon_{n})$
of nonnegative real numbers and a real number $\alpha\neq 1$
such that
\begin{multline}\label{Eq3.6}
\left|I_{n}(p_{1}, \ldots, p_{n})\right.\\
-I_{n-1}(p_{1}+p_{2}, p_{3}, \ldots, p_{n})-
\left.(p_{1}+p_{2})^{\alpha}I_{2}\left(\frac{p_{1}}{p_{1}+p_{2}}, \frac{p_{2}}{p_{1}+p_{2}}\right)\right|
\leq
\varepsilon_{n-1}
\end{multline}
for all $n\geq 3$ and $(p_{1}, \ldots, p_{n})\in\Gamma^{\circ}_{n}$, and
\begin{equation}\label{Eq3.7}
\left|I_{3}(p_{1}, p_{2}, p_{3})-I_{3}(p_{1}, p_{3}, p_{2})\right|\leq \varepsilon_{1}
\end{equation}
holds on $\Gamma^{\circ}_{n}$.
Then, in case $\alpha<0$ there exist $c, d\in\mathbb{R}$ such that
\begin{multline}\label{Eq3.8}
\left|I_{n}\left(p_{1}, \ldots, p_{n}\right)-
\left[c H^{\alpha}_{n}\left(p_{1}, \ldots, p_{n}\right)+d\left(p_{1}^{\alpha}-1\right)\right]\right| \\
\leq \sum^{n-1}_{k=2}\varepsilon_{k}+K(\alpha)\left(2\varepsilon_{2}+\varepsilon_{1}\right)
\left(1+\sum^{n-1}_{k=2}\left(\sum^{k}_{i=1}p_{i}^{\alpha}\right)\right)
\end{multline}
for all $n\geq2$ and $\left(p_{1}, \ldots, p_{n}\right)\in\Gamma^{\circ}_{n}$.
Furthermore, in case $\alpha=0$ there exists a logarithmic function
$l:]0, 1[\rightarrow\mathbb{R}$ and $c\in\mathbb{R}$ such that
\begin{multline}\label{Eq3.9}
\left|I_{n}\left(p_{1}, \ldots, p_{n}\right)-\left[cH^{0}_{n}\left(p_{1}, \ldots, p_{n}\right)+l(p_{1})\right]\right|\\
\leq \sum^{n-1}_{k=2}\varepsilon_{k}+K(\alpha)\left(n-1\right)\left(2\varepsilon_{2}+\varepsilon_{1}\right)
\end{multline}
for all $n\geq 2$ and $\left(p_{1}, \ldots, p_{n}\right)\in\Gamma^{\circ}_{n}$.
Finally, if $\alpha>0$ then
there exist $c, d\in\mathbb{R}$ such that
\begin{multline}\label{Eq3.10}
\left|I_{n}(p_{1}, \ldots, p_{n})-
\left[cH^{\alpha}_{n}(p_{1}, \ldots, p_{n})+d(p^{\alpha}_{1}-1)\right]\right|\\
\leq \sum^{n-1}_{k=2}\varepsilon_{k}+(n-1)K(\alpha)(2\varepsilon_{2}+\varepsilon_{1})
\end{multline}
holds for all $n\geq 2$ and $(p_{1}, \ldots, p_{n})\in\Gamma^{\circ}_{n}$,
where the convention
\[
\sum^{1}_{k=2}\varepsilon_{k}=\sum^{1}_{k=2}\left(\sum^{k}_{i=1}p_{i}^{\alpha}\right)=0
\]
is adopted.
\end{Thm}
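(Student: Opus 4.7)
\medskip

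The plan is to reduce the system (\ref{Eq3.6})--(\ref{Eq3.7}) to the single stability inequality (\ref{Eq2.1}) for the two-variable function $f(x)=I_2(1-x,x)$, then invoke Theorem~\ref{Thm2.1}, and finally bootstrap from $n=2$ to arbitrary $n$ by induction along the $\alpha$-recursion.

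First, I would show that $f(x)=I_2(1-x,x)$ approximately satisfies (\ref{Eq1.1}) on $D^{\circ}$ with constant $2\varepsilon_2+\varepsilon_1$. Fix $(x,y)\in D^{\circ}$ and evaluate (\ref{Eq3.6}) at the two triples $(p_1,p_2,p_3)=(1-x-y,y,x)$ and $(p_1,p_2,p_3)=(1-x-y,x,y)$; using $I_2(a,b)=f(b)$ when $a+b=1$, this yields
\[
\bigl|I_{3}(1-x-y,y,x)-f(x)-(1-x)^{\alpha}f\!\bigl(\tfrac{y}{1-x}\bigr)\bigr|\le \varepsilon_{2},
\]
\[
\bigl|I_{3}(1-x-y,x,y)-f(y)-(1-y)^{\alpha}f\!\bigl(\tfrac{x}{1-y}\bigr)\bigr|\le \varepsilon_{2}.
\]
Combining these with (\ref{Eq3.7}) via the triangle inequality, the two copies of $I_3$ cancel and we obtain that $f$ satisfies (\ref{Eq2.1}) with $\varepsilon$ replaced by $2\varepsilon_2+\varepsilon_1$. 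Theorem~\ref{Thm2.1} then provides an explicit approximating function for $f$ with stability constant $K(\alpha)(2\varepsilon_2+\varepsilon_1)$. Translating back via $I_2(p_1,p_2)=f(p_2)$, this gives
$|I_2(p_1,p_2)-J_2(p_1,p_2)|\le K(\alpha)(2\varepsilon_2+\varepsilon_1)$, where $J_2$ equals $cH^\alpha_2(p_1,p_2)+d(p_1^\alpha-1)$ for $\alpha\neq 0$ (with $c,d$ read off from Theorem~\ref{Thm2.1} via $c=a(2^{1-\alpha}-1)$, $d=b-a$), and $J_2(p_1,p_2)=cH^0_2+l(p_1)$ for $\alpha=0$.

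Next I would define $J_n$ in the natural way in each case, namely $J_n(p_1,\dots,p_n)=cH^\alpha_n(p_1,\dots,p_n)+d(p_1^\alpha-1)$ for $\alpha\neq 0$ and $J_n=cH^0_n+l(p_1)$ for $\alpha=0$, and check \emph{exact} recursivity:
\[
J_n(p_1,\dots,p_n)=J_{n-1}(p_1+p_2,p_3,\dots,p_n)+(p_1+p_2)^{\alpha}J_{2}\!\left(\tfrac{p_1}{p_1+p_2},\tfrac{p_2}{p_1+p_2}\right).
\]
This is a direct computation: for the $cH^\alpha_n$ piece one uses $\sum p_i^\alpha-(p_1+p_2)^\alpha+(p_1+p_2)^\alpha((p_1/(p_1+p_2))^\alpha+(p_2/(p_1+p_2))^\alpha)=\sum p_i^\alpha$, and for the $d(p_1^\alpha-1)$ piece (respectively the logarithmic piece when $\alpha=0$) one uses $(p_1+p_2)^\alpha-1+(p_1+p_2)^\alpha((p_1/(p_1+p_2))^\alpha-1)=p_1^\alpha-1$ (resp.\ $l(p_1+p_2)+l(p_1/(p_1+p_2))=l(p_1)$).

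Now I would induct on $n\ge 2$, the base case being the application of Theorem~\ref{Thm2.1} just described. For the step, subtract the exact recursion for $J_n$ from inequality (\ref{Eq3.6}) and apply the triangle inequality together with the inductive hypothesis at $(p_1+p_2,p_3,\dots,p_n)\in\Gamma^{\circ}_{n-1}$ and the $n=2$ bound at $(p_1/(p_1+p_2),p_2/(p_1+p_2))$, to obtain
\[
|I_n-J_n|\le \varepsilon_{n-1}+|I_{n-1}-J_{n-1}|+(p_1+p_2)^{\alpha}K(\alpha)(2\varepsilon_2+\varepsilon_1).
\]
For $\alpha>0$ and $\alpha=0$ the bookkeeping is immediate, since $(p_1+p_2)^\alpha\le 1$, giving the $(n-1)K(\alpha)(2\varepsilon_2+\varepsilon_1)$ term in (\ref{Eq3.9}) and (\ref{Eq3.10}). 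The main (still routine) obstacle is the $\alpha<0$ case, where $(p_1+p_2)^\alpha>1$: here I would use the monotonicity $t\mapsto t^\alpha$ decreasing for $\alpha<0$, which yields $(p_1+p_2)^\alpha\le \min(p_1^\alpha,p_2^\alpha)\le p_1^\alpha+p_2^\alpha$, so that the sum $\sum_{k=2}^{n-2}\sum_{i=1}^{k}q_i^\alpha$ coming from the inductive hypothesis (with $q_1=p_1+p_2$, $q_j=p_{j+1}$ for $j\ge 2$) is majorized by $\sum_{j=3}^{n-1}\sum_{i=1}^{j}p_i^\alpha$, and the extra $(p_1+p_2)^\alpha$ term is absorbed by $p_1^\alpha+p_2^\alpha=\sum_{i=1}^{2}p_i^\alpha$; the resulting coefficient of $K(\alpha)(2\varepsilon_2+\varepsilon_1)$ is then exactly $1+\sum_{k=2}^{n-1}\sum_{i=1}^{k}p_i^\alpha$, yielding (\ref{Eq3.8}).
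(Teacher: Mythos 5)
Your proposal is correct and follows essentially the same route as the paper: reduce (\ref{Eq3.6})--(\ref{Eq3.7}) to inequality (\ref{Eq2.1}) for $f(x)=I_{2}(1-x,x)$ with $\varepsilon=2\varepsilon_{2}+\varepsilon_{1}$, apply Theorem~\ref{Thm2.1} for the base case, and induct on $n$ using the exact $\alpha$-recursivity of the comparison measures $J_{n}$, with the bound $(p_{1}+p_{2})^{\alpha}\leq p_{1}^{\alpha}+p_{2}^{\alpha}$ handling the $\alpha<0$ bookkeeping exactly as in the paper. If anything, you are slightly more careful than the printed proof, which only cites the reduction step and whose displayed definition of $J_{n}$ omits the $d(p_{1}^{\alpha}-1)$ term that you correctly include.
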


\begin{proof}
As in \cite{Mak08}, due to (\ref{Eq3.6}) and (\ref{Eq3.7}),
it can be proved that,
for the function $f$ defined on $]0, 1[$ by
$f(x)=I_{2}(1-x, x)$ we get that
\[
\left|f(x)+(1-x)^{\alpha}f\left(\frac{y}{1-x}\right)
-f(y)-(1-y)^{\alpha}f\left(\frac{x}{1-y}\right)\right|\leq 2\varepsilon_{2}+\varepsilon_{1}
\]
for all $(x, y)\in D^{\circ}$, i.e., (\ref{Eq2.1}) holds with
$\varepsilon=2\varepsilon_{2}+\varepsilon_{1}$.
Therefore, applying Theorem \ref{Thm2.1}. we obtain
(\ref{Eq2.2}) and (\ref{Eq2.3}), respectively, with some
$a, b, c\in\mathbb{R}$ and a logarithmic function $l:]0, 1[\rightarrow\mathbb{R}$
and
$\varepsilon=2\varepsilon_{2}+\varepsilon_{1}$, i.e.,

\[
\left|I_{2}\left(1-x, x\right)-\left(ax^{\alpha}+b(1-x)^{\alpha}-b\right)\right|\leq K(\alpha)(2\varepsilon_{2}+\varepsilon_{1}),
\quad \left(x\in ]0, 1[\right)
\]
in case $\alpha\neq 0$, and
\[
\left|I_{2}\left(1-x, x\right)-\left(l(1-x)+c\right)\right|\leq K(\alpha)(2\varepsilon_{2}+\varepsilon_{1})
\quad \left(x\in ]0, 1[\right)
\]
in case $\alpha=0$.

Therefore (\ref{Eq3.8}) and (\ref{Eq3.10}) holds  with $c=(2^{1-\alpha}-1)a$,
$d=b-a$ in case $\alpha<0$ and in case $\alpha>0$,
furthermore, (\ref{Eq3.9}) holds in case $\alpha=0$, respectively, for $n=2$.

We continue the proof by induction on $n$.
Suppose that (\ref{Eq3.8}), (\ref{Eq3.9}) and (\ref{Eq3.10}) holds, resp., and for the
sake of brevity, introduce the notation
\[
J_{n}(p_{1}, \ldots, p_{n})=\left\{
\begin{array}{lcl}
cH^{\alpha}_{n}(p_{1}, \ldots, p_{n}),&\text{if}& \alpha\neq 0 \\
cH^{0}_{n}(p_{1}, \ldots, p_{n})+l(p_{1}),&\text{if}& \alpha=0
\end{array}
\right.
\]
for all $n\geq 2$, $(p_{1}, \ldots, p_{n})\in\Gamma^{\circ}_{n}$.
It can easily be seen that (\ref{Eq3.8}), (\ref{Eq3.9}) and (\ref{Eq3.10}) hold
on $\Gamma^{\circ}_{n}$ for $J_{n}$ instead of $I_{n}$ ($n\geq 3$)
with $\varepsilon_{n}=0$ ($n\geq 2$).
Thus, for all $(p_{1}, \ldots, p_{n+1})\in\Gamma^{\circ}_{n+1}$, we get that
\[
\begin{array}{l}
I_{n+1}(p_{1}, \ldots, p_{n+1})-J_{n+1}(p_{1}, \ldots, p_{n+1}) \\
=I_{n+1}(p_{1}, \ldots, p_{n+1})-
J_{n}(p_{1}+p_{2}, p_{3}, \ldots, p_{n+1})-
(p_{1}+p_{2})^{\alpha}J_{2}\left(\frac{p_{1}}{p_{1}+p_{2}}, \frac{p_{2}}{p_{1}+p_{2}}\right) \\
=I_{n+1}(p_{1}, \ldots, p_{n+1})-I_{n}(p_{1}+p_{2}, p_{3}, \ldots, p_{n+1})-
(p_{1}+p_{2})^{\alpha}I_{2}\left(\frac{p_{1}}{p_{1}+p_{2}}, \frac{p_{2}}{p_{1}+p_{2}}\right) \\
+I_{n}(p_{1}+p_{2},p_{3}, \ldots, p_{n+1})-J_{n}(p_{1}+p_{2},p_{3}, \ldots, p_{n+1}) \\
+ (p_{1}+p_{2})^{\alpha}\left(I_{2}\left(\frac{p_{1}}{p_{1}+p_{2}}\right)-
J_{2}\left(\frac{p_{1}}{p_{1}+p_{2}}, \frac{p_{2}}{p_{1}+p_{2}}\right)\right).
\end{array}
\]

Therefore, if $\alpha<0$,  (\ref{Eq3.6}) (with $n+1$ instead of $n$),
(\ref{Eq3.8}) with $n=2$ and the induction hypothesis
(applying to $(p_{1}+p_{2}, \ldots, p_{n+1})$ instead of $(p_{1}, \ldots, p_{n})$)
imply that

\begin{multline*}
\left|I_{n+1}(p_{1}, \ldots, p_{n+1})-J_{n+1}(p_{1}, \ldots, p_{n+1})\right| \\
\leq \varepsilon_{n}+\sum^{n-1}_{k=2}\varepsilon_{k}+
K(\alpha)(2\varepsilon_{2}+\varepsilon_{1})\left(1+\sum^{n-1}_{k=2}\left(\sum^{k+1}_{i=1}p_{i}^{\alpha}\right)\right) \\
+K(\alpha)(2\varepsilon_{2}+\varepsilon_{1})(p_{1}+p_{2})^{\alpha} \\
= \sum^{n}_{k=2}\varepsilon_{k}+K(\alpha)(2\varepsilon_{2}+\varepsilon_{1})
\left(1+\sum^{n}_{k=2}\left(\sum^{k}_{i=1}p_{i}^{\alpha}\right)\right),
\end{multline*}

that is (\ref{Eq3.8}) holds for $n+1$ instead of $n$.

Furthermore, if $\alpha=0$, (\ref{Eq3.6}) (with $n+1$ instead of $n$),
(\ref{Eq3.9}) with $n=2$ and the induction hypothesis
(applying to $(p_{1}+p_{2}, \ldots, p_{n+1})$ instead of $(p_{1}, \ldots, p_{n})$)
imply that

\begin{multline*}
\left|I_{n+1}(p_{1}, \ldots, p_{n+1})-J_{n+1}(p_{1}, \ldots, p_{n+1})\right| \\
\leq \varepsilon_{n}+\sum^{n-1}_{k=2}\varepsilon_{k}+K(\alpha)(n-1)(2\varepsilon_{2}+\varepsilon_{1})+
K(\alpha)(2\varepsilon_{2}+\varepsilon_{1}) \\
=\sum^{n}_{k=2}\varepsilon_{k}+K(\alpha)n(2\varepsilon_{2}+\varepsilon_{1}).
\end{multline*}

This yields that (\ref{Eq3.9}) holds for $n+1$ instead of $n$.

Finally, if $\alpha>0$, then (\ref{Eq3.6}) (with $n+1$ instead of $n$),
(\ref{Eq3.10}) with $n=2$ and the induction hypothesis
(applying to $(p_{1}+p_{2}, \ldots, p_{n+1})$ instead of $(p_{1}, \ldots, p_{n})$)
imply that

\begin{multline*}
\left|I_{n+1}(p_{1}, \ldots, p_{n+1})-J_{n+1}(p_{1}, \ldots, p_{n+1})\right| \\
\leq \varepsilon_{n}+\sum^{n-1}_{k=2}\varepsilon_{k}+K(\alpha)(n-1)(2\varepsilon_{2}+\varepsilon_{1})+
K(\alpha)(2\varepsilon_{2}+\varepsilon_{1}) \\
=\sum^{n}_{k=2}\varepsilon_{k}+K(\alpha)n(2\varepsilon_{2}+\varepsilon_{1}),
\end{multline*}

that is, (\ref{Eq3.10}) holds for $n+1$ instead of $n$.
\end{proof}

\begin{Rem}
Applying Theorem \ref{Thm3.2} with the choice $\varepsilon_{n}=0$
for all $n\in\mathbb{N}$, we get the $\alpha$--recursive, $3$--semi--symmetric
information measures. Hence Theorem \ref{Thm3.2} says that
the system of $\alpha$--recursive and $3$--semi--symmetric information measures is
stable.
\end{Rem}

\section{Open problems}

In the last part of the paper we list some open problems from the investigated topic.

The stability of equation (\ref{Eq1.1}) in the exceptional case $\alpha=1$
was raised by Sz\'{e}kelyhidi in \cite{Szek91}, and it is still open.

\begin{Opp}
Prove or disprove that the fundamental equation of information is stable on the
set $D^{\circ}$ or on the set $D$.
\end{Opp}

We remark that concerning this problem a partial result was published in
Morando \cite{Mor01}.

In the monograph of Ebanks, Sahoo and Sander (see \cite{ESS98})
higher dimensional information measures and functions are considered.
A stability type result was published in \cite{Gse08}, assuming the
underlying multiplicative function is bounded on its closed domain.
Therefore the following problem can be formulated.

\begin{Opp}
What can be said about the stability of the
fundamental equation of information of multiplicative type on the closed
as well as on the open domain?
\end{Opp}

In the inset theory (see e.g. Acz\'{e}l--Dar\'{o}czy \cite{AD75}),
measures of information may be depend
on both the probabilities and events.
Thus the problem of finding all
inset information measures lead to the generalized
fundamental equation of information of degree alpha, that is,
to the functional equation
\[
f(x)+(1-x)^{\alpha}g\left(\frac{y}{1-x}\right)=
h(x)+(1-y)^{\alpha}k\left(\frac{x}{1-y}\right).
\quad \left((x, y)\in D^{\circ}\right)
\]
This equation was solved in Maksa \cite{Mak82}.

\begin{Opp}
Is it true that the generalized fundamental equation of information
of degree alpha is stable on the set $D^{\circ}$?
\end{Opp}

\noindent
\textbf{Acknowledgement. }I am grateful to Professor Gyula Maksa for encouraging me
to write this paper and I would like to express my
sincere thanks for his help during the preparation
of the manuscript.


\begin{thebibliography}{99}


\bibitem{Acz81}
J.~Acz\'{e}l,
\emph{Notes on generalized information functions}, Aequationes Math.,
\textbf{22} (1981), 91--107.

\bibitem{Acz86}
J.~Acz\'{e}l,
\emph{Characterizing information measures:
approaching the end of an era}, Lectures Notes
in Computer Science, \textbf{286}, Uncertainty in Knowledge--Based
Systems, Springer--Verlag, (1986), 359--384.

\bibitem{AD75}
J.~Acz\'el, Z.~Dar\'oczy,
\emph{On measures of information and their characterization},
Academic Press, New York -- San Francisco -- London, 1975.

\bibitem{Dar70}
Z.~Dar\'{o}czy,
\emph{Generalized information functions},
Information and Control \textbf{16} (1970), 36--51.

\bibitem{ESS98}
B.~R.~Ebanks, P.~Sahoo, W.~Sander,
\emph{Characterization of information measures},
World Scientific Publishing Co. Inc., River Edge, NJ, 1998.


\bibitem{For95}
Z. L.~ Forti,
\emph{Hyers--Ulam stability of functional equations in several variables},
Aequationes Math.
\textbf{50} (1995), no. 1--2, 143--190.


\bibitem{Ger94}
R.~Ger,
\emph{A survey of recent results on stability of functional equations,
Proceeding of the $4^{th}$ International Conference on Functional Equations and Inequalities},
Pedagogical University in Cracow (1994), 5--36.

\bibitem{Gse08}
E.~Gselmann,
\emph{Stability type results concerning the fundamental equation of
information of multiplicative type}, To appear in Coll. Math.

\bibitem{GM08}
E.~Gselmann, Gy.~Maksa,
\emph{Stability of the parametric fundamental equation
of information for nonpositive parameters},
To appear in Aequationes Math.


\bibitem{HC67}
J.~Havrda and F.~Charv\'{a}t,
\emph{Quantification Method of Classification Processes.
Concept of Structural $\alpha$-Entropy}, Kybernetika \textbf{3} (1967), 30--35.

\bibitem{Hye41}
D.~H.~Hyers,
\emph{On the stability of the linear functional equations},
Proc. Nat. Acad. Sci. USA \textbf{27} (1941), 222--224.


\bibitem{HIR98}
D. H.~Hyers, G.~ Isac,Th. M.~ Rassias,
\emph{Stability of functional equations in several variables},
Progress in Nonlinear Differential Equations and their Applications, \textbf{34.}
Birkh\"{a}user Boston, Inc., Boston, MA, 1998.

\bibitem{Kuc85}
M.~Kuczma,
\emph{An Introduction to the Theory of Functional Equations and Inequalities. Cauchy's equation and Jensen's inequality},
Prace Naukowe Uniwersytetu \'{S}l\c{a}skiego w Katowicach [Scientific Publications of the University of Silesia], \textbf{489.} Uniwersytet \'Sl\c{a}ski, Katowice; Pa\'{n}stwowe Wydawnictwo Naukowe (PWN), Warsaw, 1985.


\bibitem{Mak82}
Gy.~Maksa,
\emph{Solution on the open triangle of the generalized fundamental equation of
information with four unknown functions},
Utilitas Math. \textbf{21} (1982), 267--282.



\bibitem{Mak08}
Gy.~ Maksa, \emph{The stability of the entropy of degree alpha},
J. Math. Anal. Appl. \textbf{346} (2008) 17--21.

\bibitem{Mor01}
A.~Morando,
\emph{A stability result concerning the Shannon entopy},
Aequationes Math. \textbf{62} (2001), 286--296.

\bibitem{Mos04}
Z.~Moszner,
\emph{Sur les d\'{e}finitions diff\'{e}rentes de la stabilit\'{e} des \'{e}quations fonctionnelles},
Aequationes Math.  \textbf{68}  (2004),  no. 3, 260--274.


\bibitem{RB87}
F.~Rad\'{o}, J.~A.~Baker,
\emph{Pexider's equation and aggregation of allocations},
Aequationes Math.  \textbf{32}  (1987),  no. 2-3, 227--239.


\bibitem{Szek91}
L.~Sz\'{e}kelyhidi, \emph{38. Problem} (in Report of Meeting), Aequationes Math.
\textbf{41} (1991), 302.


\bibitem{Tsa88}
C.~Tsallis, \emph{Possible Generalization of Boltzmann-Gibbs Statistics},
Journal of Statistical
Physics \textbf{52(1-2)} (1988), 479--487.

\bibitem{Ula64}
S.~M.~Ulam,
\emph{Problems in modern mathematics}, Science Editions John Wiley \& Sons, Inc., New York 1964.






\end{thebibliography}
\end{document}